\theoremstyle{plain}
\numberwithin{equation}{section}
\newtheorem{theorem}{Theorem}[section]
\newtheorem{prop}[theorem]{Proposition}
\newtheorem{lem}[theorem]{Lemma}
\newtheorem{rmk}[theorem]{Remark}
\newtheorem{defn}[theorem]{Definition}
\theoremstyle{definition}
\def\C{\mathbf {C}}
\renewcommand{\i}{{\rm i}}
\def\cleardoublepage{\clearpage\if@twoside \ifodd\c@page\else
\hbox{}
\thispagestyle{empty}
\newpage
\if@twocolumn\hbox{}\newpage\fi\fi\fi}
\title{Maz'ya-Shaposhnikova formula in Magnetic Fractional Orlicz-Sobolev spaces}
\author{A. Maione}
\address{Alberto Maione: Dipartimento di Matematica\\Universit\`a di Trento\\ Via Sommarive 14\\ 38123, Povo (Trento) - Italy\\}
\email{alberto.maione@unitn.it}
\author{A. M. Salort}
\address{Ariel M. Salort: Departamento de Matem\'atica, FCEyN - Universidad de Buenos Aires and IMAS - CONICET Ciudad Universitaria, Pabell\'on I (1428) Av. Cantilo s/n., Buenos Aires, Argentina.}
\email{asalort@dm.uba.ar}
\urladdr{http://mate.dm.uba.ar/~asalort}
\author{E. Vecchi}
\address{Eugenio Vecchi: Dipartimento di Matematica\\Politecnico di Milano\\ Piazza Leonardo da Vinci\\ 20133, Milano (MI) - Italy\\}
\email{eugenio.vecchi@polimi.it}
\thanks{A.M. and E.V. are supported by the Indam-GNAMPA project 2020 ``Convergenze variazionali per funzionali e operatori dipendenti da campi vettoriali\,''. A.M. is also supported by MIUR and University of Trento, Italy.
A.M.S. is partially supported by grants UBACyT 20020130100283BA, CONICET PIP 11220150100032CO and ANPCyT PICT 2012-0153.}
\date{\today}
\begin{document}
\begin{abstract}
In this note we prove the validity of the Maz'ya-Shaposhnikova formula 
in magnetic fractional Orlicz-Sobolev spaces. This complements a previous
study of the limit as $s \uparrow 1$ performed by the second author in 
\cite{BS2}.
\end{abstract}
\maketitle
\section{Introduction}


The Hamiltonian formulation of classical mechanics leads to 
an expression for the energy $H$ of a particle of mass $m$, given by
$$H(x,p) = \tfrac{p^2}{2m} + V(x), \quad (x,p) \in \mathbb{R}^n \times \mathbb{R}^n,$$
\noindent where $p$ is the momentum, $V$ is the potential energy
and $T(p):=\tfrac{p^2}{2m}$ is the kinetic energy. 
Restricting our attention to the kinetic part of the energy $H$, if we are
interested in the study of the motion of an electrically charged 
particle under the influence of a magnetic field $B$, the kinetic
energy has to be modified as follows (see e.g. \cite{LS}):
$$T(x,p):= \tfrac{1}{2m}(p + \tfrac{e}{c} A(x))^2,$$
where $e$ is the charge and $c$ is the speed of light.
The passage from classical mechanics to quantum mechanics requires the 
introduction of suitable operators. The quantization is performed by 
replacing the momentum $p$ by the Euclidean gradient $\nabla$, at least when we do
not take care of the presence of an external magnetic field $B$, otherwise
it is customary to replace the classical gradient $\nabla$ by
the so-called {\it magnetic gradient} $\nabla_A := \nabla - \i A$,
where $A:\mathbb{R}^n \to \mathbb{R}^n$ is a vector potential and ``$\i$\,'' denotes the imaginary unit. 
In particular, in the physical case of $\mathbb{R}^3$, if $A$ is smooth enough,
it holds that $\mathrm{curl}(A) =B$. The magnetic gradient can be used to define a second order differential operator usually called the {\it magnetic Laplacian}. See for instance \cite{EL, JR, JR2} and the references therein.
From the mathematical point of view, this first
order differential operator leads to the definition
of the {\it magnetic Sobolev space} $H^{1}_{A}(\mathbb{R}^n;\mathbb{C})$ (see e.g. \cite[Chapter 7, Section 19]{LossLieb}),
as the space of complex valued functions $u \in L^{2}(\mathbb{R}^n;\mathbb{C})$
such that $\nabla_A u \in (L^{2}(\mathbb{R}^n;\mathbb{C}))^n$. In order to have a proper
definition it is necessary to ask for some regularity on the
vector potential $A$, for example $L^{2}_{loc}(\mathbb{R}^n)$. 
With standard modifications one can define the magnetic Sobolev space $H^{1}_{A}(\Omega;\mathbb{C})$,
where $\Omega \subset \mathbb{R}^n$ is an open set. The Hilbertian case
(i.e. $p=2$) is certainly the physically relevant one, but it is also
possible to define magnetic Sobolev spaces $W^{1,p}_A(\mathbb{R}^n;\mathbb{C})$ 
(or $W_{A}^{1,p}(\Omega;\mathbb{C})$) for every $p \geq 1$, see e.g. \cite{PSV2}.\\
In the last years, there has been a certain interest towards a characterization
of magnetic Sobolev spaces (see \cite{NPSV,NPSV2,PSV2,SqVo}), including the validity
of a Maz'ya-Shaposhnikova formula in this setting, see \cite{PSV}.
In the classical diamagnetic case (i.e. when $A \equiv 0$), in \cite{MS} Maz'ya and Shaposhnikova showed that the fractional 
Sobolev seminorm recovers the $L^p$-norm. To be more precise, they proved that
for every $u \in \bigcup_{s\in (0,1)}W^{s,p}_{0}(\mathbb{R}^n)$, 
 \begin{equation}\label{eq:MS}
     \lim_{s\downarrow 0} s\iint_{\mathbb{R}^{2n}}\frac{|u(x)-u(y)|^p}{|x-y|^{n+sp}}\,dx\,dy= 
     2\dfrac{|S|^{n-1}}{p}\int_{\mathbb{R}^n}|u|^p\,dx,
 \end{equation}
 \noindent where $|S|^{n-1}$ denotes the measure of the unit sphere. 
 We point out that the function space $W^{s,p}_{0}(\mathbb{R}^n)$ 
stands for the closure of $C^{\infty}_{0}(\mathbb{R}^{n})$ with
 respect to the fractional Gagliardo seminorm 
 $$[u]_{s,p}:=\left(\iint_{\mathbb{R}^n \times \mathbb{R}^n}\frac{|u(x)-u(y)|^p}{|x-y|^{n+sp}}\,dx\,dy\right)^{1/p}.$$
 \noindent We stress that \eqref{eq:MS} has been the object of other generalizations
oriented to cover the case of ambient spaces different from ours, see e.g. \cite{BGT,Milman, Triebel}.
We also recall that \eqref{eq:MS} can be considered as the natural counterpart of the limit as $s \uparrow 1$
studied by Bourgain, Brezis and Mironescu in \cite{BBM}. 
\medskip

\indent The extension of \eqref{eq:MS} to the magnetic setting, requires the introduction
of appropriate {\it magnetic fractional Sobolev spaces} $W_{A}^{s,p}(\mathbb{R}^n;\mathbb{C})$.
In particular, when $p=2$, it is possible to define the fractional Sobolev
space $H^{s}_{A}(\Omega;\mathbb{C})$ for a given open set $\Omega \subset \mathbb{R}^{n}$ as the space
of complex-valued functions $u \in L^{2}(\Omega;\mathbb{C})$ for which
the following magnetic Gagliardo seminorm $[u]_{H^{s}_{A}(\Omega;\mathbb{C})}$ is finite:
$$
[u]_{H^{s}_A(\Omega;\mathbb{C})}:=\left(\iint_{\Omega\times\Omega}\dfrac{|u(x)-e^{\i (x-y)\cdot A\left(\frac{x+y}{2}\right)}u(y)|^2}{|x-y|^{n+2s}}dxdy\right)^{1/2}.
$$
This space boils down to the classical fractional Sobolev space when $A\equiv 0$ (and $u$ is real-valued) and has been proved
to be the correct space where to study nonlocal problems driven by the so-called {\it magnetic fractional Laplacian} 
$(-\Delta)^{s}_{A}$. Without any attempt of completeness, 
we refer to \cite{  AmbrosioCPDE, AmbrosioDCDS, AmbrosioProc,AmdA,  BSX,AvSq, FPV17, FV,  LRZ18,MPSZ,  MRZ}.
The nonlocal operator $(-\Delta)^{s}_{A}$ via {\it midpoint prescription} has been introduced in \cite{AvSq} for any $s \in (0,1)$ and
it admits the following representation
when acting on smooth complex-valued functions 
$u\in C^{\infty}_0(\mathbb{R}^n;\mathbb{C})$
\begin{equation}\label{eq:LaplA}
(-\Delta)_{A}^{s}u(x)
=2\lim_{\varepsilon\to 0^+}\int_{\mathbb{R}^n\setminus B(x,\varepsilon)}\frac{u(x)-e^{\i(x-y)\cdot A\left(\frac{x+y}{2}\right)}u(y)}{|x-y|^{n+2s}} dy, \qquad x\in\mathbb{R}^n,
\end{equation}
\noindent  where $B(x,\varepsilon)$ denotes the ball of center $x$ and radius $\varepsilon$. It is easy to notice
that when $A \equiv 0$ and $u$ is real-valued, $(-\Delta)_{A}^{s}u$ coincides with the usual definition of fractional Laplacian.
See Remark \ref{MagneticRemark} and Remark \ref{GaugeRemark} 
for further comments on the magnetic fractional Laplacian and magnetic fractional spaces.

\medskip

The aim of this note is to continue the study of properties of magnetic spaces, focusing on the 
extension of \eqref{eq:MS} to the case of magnetic fractional Orlicz-Sobolev spaces,
a class of function spaces which has been recently introduced in \cite{BS2}, 
where the authors proved the validity of a Bourgain-Brezis-Mironescu-type 
formula, so extending to the magnetic setting their previous result in \cite{BS}.
In order to properly state our result, let us first give a basic glance to the objects
in play. We consider a smooth enough magnetic potential $A\colon \mathbb{R}^n\to \mathbb{R}^n$ 
and a Young function $G$ satisfying the following standard growth condition:
\begin{equation} \label{L} \tag{L}
1\leq p^-\leq \frac{t\,G'(t)}{G(t)} \leq p^+<\infty \quad \text{for every } t>0.
\end{equation}
We also consider the Young function 
$\overline{G} \colon \mathbb{R}^{+}_0 \to \mathbb{R}^{+}_0$ 
naturally associated to $G$, which is defined as 
\begin{equation}\label{bar}
\overline{G}(t):= \int_0^tG(\tau) \frac{d\tau}{\tau}, \quad t\in\mathbb{R}^{+}_0.
\end{equation}

For any $s \in (0,1)$ it is now possible to introduce a proper notion of
{\em magnetic fractional Orlicz Sobolev spaces} $W_{A}^{s,G}$, 
see section \ref{section2} for the precise definition. 

Our main result reads as follows. 

\begin{theorem}\label{MS1}
Let $G$ be a Young function satisfying \eqref{L} and $A\colon \mathbb{R}^n \to  \mathbb{R}^n$ be a magnetic field. 
If $u\in\bigcup_{s\in (0,1)}W_{A,0}^{s,G}(\mathbb{R}^n;\mathbb{C})$, then
\begin{equation}\label{MainRes}
\lim_{s\downarrow 0}s \iint_{\mathbb{R}^n\times\mathbb{R}^n}G\left(\left| \frac{u(x)-e^{\i(x-y) A\left(\tfrac{x+y}{2}\right)} u(y)}{|x-y|^s} \right|\right)\,\dfrac{dx\, dy}{|x-y|^{n}} 
=\dfrac{2 \, \omega_n}{n} \int_{\mathbb{R}^n}\overline{G}(|u(x)|)\,dx.
\end{equation}
\end{theorem}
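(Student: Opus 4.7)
The plan is to adapt the proof of the Maz'ya--Shaposhnikova formula \cite{MS} to the Orlicz--magnetic setting by combining the Orlicz technique of \cite{BS} with the magnetic perturbation approach used in \cite{PSV}. Denote the double integral on the left-hand side of \eqref{MainRes} (without the prefactor $s$) by $I_s(u)$, and split it as $I_s(u)=I_s^{<}(u)+I_s^{>}(u)$ according to whether $|x-y|\le 1$ or $|x-y|>1$; then $s\,I_s^{<}(u)$ will be shown to be $O(s)$ and $s\,I_s^{>}(u)$ will carry the main term. Throughout the argument, fix $s_0\in(0,1)$ such that $u\in W_{A,0}^{s_0,G}(\mathbb{R}^n;\mathbb{C})$ and restrict to $s\in(0,s_0)$.

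\noindent\textbf{Near-diagonal contribution.} For $|x-y|\le 1$ one has $|x-y|^{s_0}\le|x-y|^{s}$, so the monotonicity of $G$ yields
$$
G\!\left(\frac{|u(x)-e^{\i(x-y)\cdot A(\frac{x+y}{2})}u(y)|}{|x-y|^{s}}\right)\le G\!\left(\frac{|u(x)-e^{\i(x-y)\cdot A(\frac{x+y}{2})}u(y)|}{|x-y|^{s_0}}\right),
$$
whose right-hand side is integrable against $dx\,dy/|x-y|^{n}$ on $\{|x-y|\le 1\}$ because $u\in W_{A,0}^{s_0,G}$. Multiplying by $s$ therefore gives $s\,I_s^{<}(u)=O(s)\to 0$ as $s\downarrow 0$.

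\noindent\textbf{Far-from-diagonal contribution.} By density of $C_c^\infty(\mathbb{R}^n;\mathbb{C})$ in $W_{A,0}^{s_0,G}$ and uniform-in-$s$ continuity estimates (addressed in the next paragraph), I reduce to $u\in C_c^\infty(\mathbb{R}^n;\mathbb{C})$ supported in some ball $B(0,K)$, and pick $R>2K$. On the annulus $\{1\le|x-y|\le R\}$ the integrand is bounded and the effective domain is compact, so the corresponding contribution to $s\,I_s^{>}(u)$ is again $O(s)$. On $\{|x-y|\ge R\}$ the constraint $|x-y|>2K$ forces at least one of $u(x)$, $u(y)$ to vanish, so the magnetic phase drops from the integrand; exploiting the $x\leftrightarrow y$-symmetry and passing to polar coordinates $r=|x-y|$ then transforms this part of $s\,I_s^{>}(u)$ into
$$
2s\,|S|^{n-1}\int_{\mathbb{R}^n}\int_R^\infty G\!\left(\frac{|u(x)|}{r^{s}}\right)\frac{dr}{r}\,dx.
$$
The substitution $\tau=|u(x)|/r^s$ together with the definition \eqref{bar} converts the inner integral into $s^{-1}\overline{G}(|u(x)|/R^s)$, absorbing the factor $s$; dominated convergence as $s\downarrow 0$ then produces the right-hand side of \eqref{MainRes}.

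\noindent\textbf{Main obstacle.} The hard part will be the density reduction, because $I_s(u)$ is not uniformly bounded in $s$ (indeed it grows like $1/s$). What is needed is the uniform-in-$s$ estimate $s\,I_s(u)-s\,I_s(u_k)\to 0$ whenever $u_k\to u$ in $W_{A,0}^{s_0,G}$. I would obtain this by splitting the difference into its near- and far-from-diagonal parts and bounding the former by the $s_0$-Gagliardo seminorm of $u-u_k$ and the latter by the $L^G$-norm of $u-u_k$ via the same polar-coordinate computation, using the $\Delta_2$-type growth encoded in \eqref{L} together with the convexity of $G$ to handle the Orlicz nonlinearity.
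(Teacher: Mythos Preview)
Your route is genuinely different from the paper's. The paper never invokes density: for the liminf it applies the pointwise diamagnetic inequality $\bigl||u(x)|-|u(y)|\bigr|\le\bigl|u(x)-e^{\i(x-y)\cdot A(\frac{x+y}{2})}u(y)\bigr|$ (Proposition~\ref{diamagnetic}) to obtain $sI_{s,G}(|u|)\le sI_{s,G}^A(u)$ and then quotes the non-magnetic result of \cite{ACPS}; for the limsup it works directly with $u$, symmetrizes to the region $\{|y|\ge|x|\}$, splits this into $\{|y|\ge 2|x|\}$ and $\{|x|\le|y|<2|x|\}$, separates $u(x)$ from $u(y)$ by convexity on the first piece, and passes to the limit with the help of the Hardy inequality (Theorem~\ref{hardy}). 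Your near/far splitting together with the explicit computation for $u\in C_c^\infty$ is correct and pleasantly elementary, and if the density step could be carried out it would make Theorem~\ref{hardy} unnecessary.

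The gap is precisely in that density reduction, which you correctly flag but underspecify. Since $v\mapsto I_s(v)$ is a modular rather than a (semi)norm, the difference $sI_s(u)-sI_s(u_k)$ is \emph{not} controlled by $sI_s(u-u_k)$, nor by any norm of $u-u_k$ alone. What convexity and \eqref{L} actually give is a one-sided estimate of the type
\[
sI_s^{>}(u)\;\le\;(1+\eps)^{p^+-1}\,sI_s^{>}(u_k)\;+\;C_\eps\int_{\mathbb{R}^n}\overline{G}\Bigl(\tfrac{2(1+\eps)}{\eps}\,|u-u_k|\Bigr)\,dx
\]
(and the symmetric one with $u$ and $u_k$ swapped), from which the result follows only after taking $\limsup_s$ (resp.\ $\liminf_s$), then $k\to\infty$, then $\eps\to 0$; your sketch does not expose this mechanism. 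There is also a more structural issue: in the paper $W^{s_0,G}_{A,0}$ is defined as the closure of $C_c^\infty$ with respect to the \emph{seminorm} $|\cdot|^A_{s_0,G}$, so approximation in that topology does not automatically yield the $L^G$-convergence $\|u-u_k\|_G\to 0$ that your far-part bound requires. You would have to supply this separately---for instance via Theorem~\ref{hardy}, at which point the Hardy inequality re-enters the argument after all.
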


The proof of Theorem \ref{MS1} is presented in section \ref{section4} and it is obtained combining careful estimates
of both the liminf and the limsup. The technique adopted to achieve those estimates follows the lines
of the original proof by Maz'ya and Shaposhnikova in \cite{MS}.
We mention that the same scheme has been already successfully applied
in providing extensions of \eqref{eq:MS} in different settings: we refer to \cite{PSV} for the classical fractional magnetic
setting, \cite{ACPS} for the case of fractional Orlicz-Sobolev spaces and \cite{CMSV,CMSV2} for the case
of fractional Orlicz-Sobolev spaces in Carnot groups. \\
\noindent We note that, coherently with \cite{PSV}, \eqref{MainRes} shows that the limit as $s \downarrow 0$ kills the magnetic
effect so recovering the same limit found in \cite{ACPS}. We also stress that assuming condition \eqref{L} on the Young function $G$
is not restrictive at all, as showed in \cite[Theorem 1.2]{ACPS}.
It is noteworthy to mention that, in order to perform the estimates necessary to prove \eqref{MainRes}, we have
to ensure the finiteness of certain integrals (see section \ref{section4} for the details). To this aim, in analogy with \cite{ACPS}, we have to prove a magnetic Hardy-type inequality for Orlicz functions for \emph{small} values of the fractional parameter $s$ (we refer to Remark \ref{rmkhardy} for more details). 
This is the content of the following
\begin{theorem} \label{hardy}
Given a Young function $G$ satisfying \eqref{L} and $s\in (0,1)$ such that $s<\frac{n}{p^+}$, there exists a constant $C=C(n,s,p^\pm)$ such that
\begin{equation}\label{hi}
   \int_{\mathbb{R}^n}G\left(\frac{|u(x)|}{|x|^{s}}\right)\,dx\leq C \iint_{\mathbb{R}^{2n}} G\left(\left|\dfrac{u(x)-e^{\i(x-y) A\left(\frac{x+y}{2}\right)} u(y)}{|x-y|^s} \right| \right) \,\dfrac{dx\, dy}{|x-y|^{n}}
\end{equation}
for all $u\in W^{s,G}_{A,0}(\mathbb{R}^n;\mathbb{C})$.
\end{theorem}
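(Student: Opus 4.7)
The plan is to reduce the magnetic Hardy inequality \eqref{hi} to its non-magnetic counterpart, already established for fractional Orlicz--Sobolev spaces in \cite{ACPS}, via the pointwise diamagnetic inequality and an approximation by smooth compactly supported functions. The same scheme, in the power case $G(t)=t^p$, appears in \cite{PSV}.

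The key pointwise ingredient is the diamagnetic inequality. For every $a,b\in\mathbb{C}$ and $\theta\in\mathbb{R}$,
$$|a - e^{\i\theta} b|^2 = |a|^2 - 2\,\mathrm{Re}(\overline{a}\, e^{\i\theta} b) + |b|^2 \geq \bigl(|a|-|b|\bigr)^2,$$
so that with $\theta=(x-y)\cdot A((x+y)/2)$ we have
$$\bigl||u(x)|-|u(y)|\bigr| \leq \left|u(x) - e^{\i(x-y)\cdot A((x+y)/2)}u(y)\right|.$$
Dividing by $|x-y|^s$, applying the monotone Young function $G$ and integrating against $|x-y|^{-n}\,dx\,dy$ on $\mathbb{R}^{2n}$ shows that the non-magnetic fractional Orlicz Gagliardo modular of $|u|$ is dominated by the magnetic one of $u$.

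The argument is then closed by approximation. Pick $u_k\in C^\infty_c(\mathbb{R}^n;\mathbb{C})$ converging to $u$ in the magnetic norm; each $|u_k|$ is Lipschitz with compact support, hence lies in the non-magnetic space $W^{s,G}_{0}(\mathbb{R}^n;\mathbb{R})$, and the scalar Hardy inequality of \cite{ACPS}, valid under $s<n/p^+$ with constant $C=C(n,s,p^\pm)$, gives
$$\int_{\mathbb{R}^n} G\!\left(\frac{|u_k(x)|}{|x|^s}\right) dx \leq C \iint_{\mathbb{R}^{2n}} G\!\left(\frac{\bigl||u_k(x)|-|u_k(y)|\bigr|}{|x-y|^s}\right)\frac{dx\,dy}{|x-y|^n}.$$
Chaining this with the diamagnetic bound from the previous step, one passes to the limit $k\to\infty$: Fatou's lemma on the left (along a subsequence with $u_k\to u$ a.e.), and norm-to-modular convergence on the right courtesy of the $\Delta_2$ condition encoded in \eqref{L}. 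The main technical obstacle is precisely this last passage, namely turning the magnetic norm convergence $u_k\to u$ in $W^{s,G}_{A,0}$ into modular convergence of the magnetic Gagliardo integrands, a point that is standard for Orlicz moduli satisfying \eqref{L} but that must be invoked carefully in the complex-valued magnetic setting.
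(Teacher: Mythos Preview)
Your proposal is correct and follows essentially the same strategy as the paper: combine the pointwise diamagnetic inequality with the non-magnetic fractional Orlicz Hardy inequality. The paper, however, is more direct: it applies the Hardy inequality from \cite[Theorem 5.1]{ACPS2} straight to $|u|$ (which lies in the non-magnetic space by the diamagnetic bound), then invokes \cite[Propositions 5.1 and 5.2]{C} to identify the auxiliary Young function $\hat G$ with $G$ under the hypothesis $s<n/p^+$, and finally uses \eqref{G1} to absorb the constant inside $G$. Your approximation step via $u_k\in C^\infty_c$ and the subsequent Fatou/modular-continuity passage is not wrong, but it is unnecessary and introduces the extra technical obstacle you yourself flag; also note that the relevant Hardy inequality is in \cite{ACPS2}, not \cite{ACPS}.
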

This result recovers the fractional magnetic Hardy-inequality proved in \cite{PSV} when $G(t)=t^p$
and \cite[Theorem 5.1]{ACPS2}, if instead we consider the diamagnetic case, i.e. when $A \equiv 0$.
The proof is an easy combination of the latter with a standard diamagnetic inequality, see section \ref{section3} for the details.

\medskip

We want to stress that the interest towards fractional Orlicz-Sobolev spaces is recently moving to the
study of nonlocal PDEs driven by the so-called $G$-Laplacian, which is a nonlocal operator
naturally modeled on a given Young function $G$, see e.g. \cite{  BBX, BOT,DNFBS,BLS, Salort}. We refer also to \cite{BBR, MiR, MRR} for some related problems concerning the local $G$-Laplacian.

\medskip

\indent The paper is organized as follows: in section \ref{section2} we provide the necessary definitions and basic results
concerning magnetic fractional Orlicz-Sobolev spaces, in section \ref{section3} we prove Theorem \ref{hardy} and finally
in section \ref{section4} we prove Theorem \ref{MS1}.

\section{Preliminaries}\label{section2}

In this section we introduce definitions and notations and  we recall few results needed in the next sections.

\subsection{Young functions}
We recall now the basic notions concerning Young functions. We refer the interested reader to the books \cite{DHHR,PKJF} for a comprehensive introduction to the subject.
\begin{defn} Let $\phi:\mathbb{R}^+_0\to\mathbb{R}^+_0$ be a function satisfying the following conditions:
\begin{itemize} 
    \item[$(i)$] $\phi(0)=0$, and $\phi(t)>0$ for any $t>0$;
    \item[$(ii)$] $\phi$ is non-decreasing;
    \item[$(iii)$] $\phi$ is right-continuous and $\lim_{t\to+\infty}\phi(t)=+\infty$.
\end{itemize}
We call Young function the real valued function $G: \mathbb{R}^+_0 \to \mathbb{R}^+_0$ given by 
$$G(t)=\int_0^t \phi(s)\,ds.$$
\end{defn}
We recall that properties $(i)$ to $(iii)$ imply that every Young function $G$ is continuous, 
locally Lipschitz continuous, strictly increasing and convex on $\mathbb{R}^+_0$. 
It is also quite easy to see that $G(0)=0$ and that $G$ is superlinear both at zero and at $+\infty$.
We can also assume without loss of generality that $G(1)=1$. 
Finally, the inverse $G^{-1}:\mathbb{R}^+_0\to\mathbb{R}^+_0$ of $G$ is also well defined and inherits a 
bunch of properties: in particular, $G^{-1}$ is continuous, concave, strictly increasing,
$G^{-1}(0)=0$ and $G^{-1}(1)=1$.\\
\noindent As already stated in the introduction, we will work with Young functions satisfying \eqref{L}.
As a direct consequence, we notice that
\begin{align*}
  &s^{\overline{p}}G(t)\leq G(st)\leq s^{\tilde{p}}G(t),\tag{$G_1$}\label{G1}\\
  &G(s+t)\leq\frac{2^{p^+}}{2}(G(s)+G(t))\tag{$G_2$}\label{G2}
\end{align*}
for any $s,t\in\mathbb{R}^+_0$, where $s^{\tilde{p}}:=\max\{s^{p^-},s^{p^+}\}$ and $s^{\overline{p}}:=\min\{s^{p^-},s^{p^+}\}$.\\

Young functions aim at generalizing the power functions, hence $G(t)=t^p$, for a certain $p\geq 1$, is definitely
a trivial example of a Young function for which $p^{-} = p = p^{+}$. There are of course less trivial instances of Young functions e.g. 
logarithmic perturbation of powers: given positive constants $a,b,c>0$ one considers a function $G$ such that $G'(t)=t^a \log(b+ct)$. 
In this case we have that $p^-=1+a$ and $p^+=2+a$.

We want to recall that condition \eqref{L} is equivalent to another
condition usually assumed when dealing with Young functions, namely the
$\Delta_2$-condition.

\begin{defn}\label{delta2}
Let $G$ be a Young function. We say that $G$ satisfies the $\Delta_2$-condition if
there exists a positive constant $\C>2$ such that
\begin{equation*}
G(2t)\leq\C\,G(t)\quad\text{for every }t\in\mathbb{R}^+_0.
\end{equation*}
\end{defn}
We close this brief recap noticing that $\overline{G}$ 
is a bounded Young function. 
Keeping in mind its definition \eqref{bar}, the boundedness follows from the monotonicity of both
$G(t)$ and $\tfrac{G(t)}{t}$. We also stress that the functions $G$ and $\overline{G}$ are {\it equivalent} 
Young functions, in the sense that
\begin{equation}\label{equivalent}
    G\left(\frac{t}{2}\right)\leq\overline{G}(t)\leq G(t)\quad\text{for every }t\in\mathbb{R}_0^+.
\end{equation}
\medskip

\subsection{Magnetic spaces}
We are now ready to introduce the relevant functional spaces appearing in Theorem \ref{MS1}. \\
To simplify the readability, we introduce the following compact notation:
let $s \in (0,1)$, we denote the magnetic H\"{o}lder quotient of order $s$ as
\begin{equation}\label{DsA}
D_s^A u(x,y) := \frac{u(x)-e^{\i(x-y) A\left(\tfrac{x+y}{2}\right)} u(y)}{|x-y|^s}.
\end{equation}
We will also denote
$$d\mu(x,y):=\frac{dx \,dy}{|x-y|^n}.$$
\noindent We notice that when $A\equiv 0$, $D_s^0 u = D_s u := \dfrac{u(x)-u(y)}{|x-y|^s}$ 
is the usual $s$-H\"older quotient appearing in the definition of fractional Sobolev spaces.

\begin{defn}
Let $G$ be an Orlicz function, $s\in (0,1)$ be a fractional parameter and $A:\mathbb{R}^n\to\mathbb{R}^n$
a smooth enough vector potential.
We define the spaces $L^G(\mathbb{R}^n;\mathbb{C})$ and $W^{s,G}_A(\mathbb{R}^n;\mathbb{C})$ as follows:
\begin{align*}
&L^G(\mathbb{R}^n;\mathbb{C}) :=\left\{ u\colon \mathbb{R}^n \to \mathbb{C} \text{ measurable}\colon  I_{G}(u) < \infty \right\},\\
&W^{s,G}_A(\mathbb{R}^n;\mathbb{C}):=\left\{ u\in L^G(\mathbb{R}^n;\mathbb{C})\colon I_{s,G}^A(u)<\infty \right\},
\end{align*}
where $I_G$ and $I_{s,G}^A$ are defined as
$$
I_{G}(u) := \int_{\mathbb{R}^n}G(|u(x)|)\,dx
$$
and
\begin{equation}\label{IsGA}
I_{s,G}^A(u) := \iint_{\mathbb{R}^{2n}}G\left(|D_s^A u(x,y)|\right)\,d\mu.
\end{equation}
\end{defn}
These spaces become Banach spaces when endowed with the so-called Luxemburg norms defined through $I_{s, G}^A$, namely
$$
\|u\|_{s, G}^A = \|u\|_G + |u|_{s,G}^A, 
$$
where 
$$
\|u\|_G := \inf\{\lambda>0\colon I_G(\tfrac{u}{\lambda})\le 1\}
$$
is the usual (Luxemburg) norm on $L^G(\mathbb{R}^n;\mathbb{C})$ and
$$
|u|_{s,G}^A := \inf\{\lambda>0\colon I_{s,G}^A(\tfrac{u}{\lambda})\le 1\}.
$$
Finally, we define the fractional Magnetic Orlicz-Sobolev space $W^{s,G}_{A,0}(\mathbb{R}^n;\mathbb{C})$ 
as the closure of $C^{\infty}_{0}(\mathbb{R}^{n}; \mathbb{C})$ with respect to the magnetic fractional Orlicz seminorm $|u|_{s,G}^A$.
We note that when $G(t)=t^p$ we recover the magnetic fractional Sobolev spaces defined in \cite{PSV,PSV2,SqVo}.
At the same time, if we assume $A \equiv 0$ the above definitions lead to the fractional Orlicz-Sobolev spaces considered
in \cite{ACPS,ACPS2}. Combining the last observations, it is also obvious that for $G(t)=t^p$ and $A\equiv 0$ we recover
the classical fractional Sobolev spaces.

\medskip
\begin{rmk}\label{MagneticRemark}
In the definition of the H\"{o}lder quotient of order $s$, and hence
in the definition of the fractional Orlicz-Sobolev spaces, we actually chose
the so-called midpoint prescription
$$(x,y)\mapsto A\left(\dfrac{x+y}{2}\right),$$
\noindent which is closely related to the magnetic fractional Laplacian defined in \eqref{eq:LaplA}.
It is noteworthy to mention that actually, for $s=\tfrac{1}{2}$, the definition of the fractional operator $(-\Delta)^s_A$
dates back to the '80s, and it is closely related to the proper definition of
a quantized operator corresponding to the symbol of the classical relativistic
Hamiltonian, namely
$$\sqrt{(\xi - A(x))^2 + m^2} + V(x), \quad (\xi,x) \in \mathbb{R}^n \times \mathbb{R}^n.$$
In particular, it is related to the {\it kinetic part} of the above
symbol. In the survey \cite{I13} it is explained that
there are at least three definitions for
such a quantized operator appearing in the literature:
two of them are given in terms of pseudo-differential operators, while the third one as
the square root of a suitable non-negative operator. 
In \cite{I13}, Ichinose showed that these three non--local operators  
are in general not the same, but they do coincide whenever dealing with a linear vector
potential $A$. A well studied example of a linear potential $A$ is the so-called
Ahronov-Bohm potential. We finally notice that, in the physically relevant case of $\mathbb{R}^{3}$, 
the linearity of $A$ is actually equivalent to require a constant magnetic field. 
\end{rmk}

\begin{rmk}\label{GaugeRemark}
As mentioned before, one may then replace midpoint prescription
with other prescriptions, e.g. the averaged one
$$
(x,y)\mapsto \int_0^1A\left((1-\vartheta)x+\vartheta y\right)d\vartheta=:
A_\sharp(x,y).
$$
From the physical point of view, the latter has the advantage that the magnetic fractional Laplacian associated to it, 
i.e. $(-\Delta)^s_{A_\sharp}$, turns out to be Gauge covariant (see e.g. \cite[Proposition 2.8]{I13}), namely 
$$
(-\Delta)^s_{(A+\nabla\phi)_\sharp}=e^{\i \phi}(-\Delta)^s_{A_\sharp}e^{-\i \phi}.
$$ 
It is clear that one can define fractional Orlicz-Sobolev spaces according to the averaged prescription.
All the results presented in this note remain true even for these spaces, once the obvious modifications are made.
\end{rmk}


\section{A fractional Magnetic Hardy-type inequality}\label{section3}
In this section we prove Theorem \ref{hardy}. The proof
comes out as a combination of the Hardy-type inequality proved in \cite{ACPS2} with
the fractional diamagnetic inequality, which in turn heavily relies upon 
the so-called {\it diamagnetic inequality}. 
The latter is well-known in the classical setting, see e.g. \cite[Theorem 7.21]{LossLieb},
and it reads as follows:

\begin{prop}
Let $A\colon \Omega\to\mathbb{R}^n$ be a measurable magnetic potential 
such that $|A|<\infty$ a.e. in $\Omega$ and let $u\in W^{1,1}_\text{loc}(\mathbb{R}^n;\mathbb{C})$. 
Then 
\begin{equation}\label{DI}
|\nabla |u|(x)| \le |\nabla u(x) - \i A(x) u(x)|,
\end{equation}
for a.e. $x\in\Omega$.
\end{prop}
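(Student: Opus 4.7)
The plan is to prove the pointwise bound by reducing to a direct algebraic identity valid where $u\neq 0$ and handling the singular locus $\{u=0\}$ via a standard smoothing.

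\textbf{Step 1 (the key algebraic observation).} Assuming for a moment that $u$ is $C^1$ and that $u(x)\neq 0$, I would start from
\[
\nabla(|u|^2) = \nabla(u\bar u) = \bar u\,\nabla u + u\,\nabla\bar u = 2\,\mathrm{Re}(\bar u\,\nabla u),
\]
which gives the classical identity $\nabla|u| = \mathrm{Re}(\bar u\,\nabla u)/|u|$ wherever $u\neq 0$. The crucial point is that, since $A$ is $\mathbb{R}^n$-valued, the vector $\bar u (\i A u) = \i A |u|^2$ is purely imaginary, so $\mathrm{Re}(\bar u\,(\i A u)) = 0$. Consequently
\[
\nabla|u| = \mathrm{Re}\!\left(\tfrac{\bar u}{|u|}(\nabla u - \i A u)\right),
\]
and since $|\bar u/|u||=1$, the Cauchy–Schwarz inequality in $\mathbb{C}^n$ immediately yields $|\nabla|u|(x)|\le |\nabla u(x) - \i A(x)u(x)|$ at every such $x$.

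\textbf{Step 2 (removing the singularity via regularization).} To handle the set $\{u=0\}$ and the fact that $u$ is only in $W^{1,1}_{\mathrm{loc}}$, I would introduce the standard smoothing $u_\eps(x):=\sqrt{|u(x)|^2+\eps^2}$ for $\eps>0$. Because the map $z\mapsto \sqrt{|z|^2+\eps^2}$ is smooth and Lipschitz on $\mathbb{C}$, the chain rule for Sobolev functions gives $u_\eps\in W^{1,1}_{\mathrm{loc}}(\mathbb{R}^n)$ with
\[
\nabla u_\eps = \frac{\mathrm{Re}(\bar u\,\nabla u)}{\sqrt{|u|^2+\eps^2}}\qquad\text{a.e. in }\Omega.
\]
Repeating the purely-imaginary argument of Step 1 on the numerator (valid pointwise a.e., including on $\{u=0\}$ since $|Au|<\infty$ a.e.) yields
\[
|\nabla u_\eps(x)|\le \frac{|u(x)|}{\sqrt{|u(x)|^2+\eps^2}}\,|\nabla u(x) - \i A(x) u(x)|\le |\nabla u(x) - \i A(x) u(x)|
\]
for a.e.\ $x\in\Omega$, uniformly in $\eps$.

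\textbf{Step 3 (passage to the limit).} Finally, I would pass $\eps\downarrow 0$. Pointwise a.e.\ one has $u_\eps\to |u|$, and the uniform $L^1_{\mathrm{loc}}$ bound on $\nabla u_\eps$ obtained in Step 2 implies, by standard Sobolev closure (e.g.\ testing against $C^\infty_0$ vector fields and using dominated convergence to identify the distributional limit), that $u_\eps\to |u|$ in $W^{1,1}_{\mathrm{loc}}$ and that $\nabla u_\eps \to \nabla |u|$ weakly. Lower semicontinuity of the norm under weak convergence, combined with the pointwise bound from Step 2, then transfers the inequality to $\nabla|u|$ and establishes \eqref{DI} a.e.

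\textbf{Main obstacle.} The algebraic Step 1 is essentially one line; the only subtlety is justifying the chain rule for $u\mapsto\sqrt{|u|^2+\eps^2}$ in the $W^{1,1}_{\mathrm{loc}}$ setting and the passage to the limit in Step 3. A clean alternative to the weak-compactness argument is to invoke directly the known fact that if $u\in W^{1,1}_{\mathrm{loc}}$ then $|u|\in W^{1,1}_{\mathrm{loc}}$ with $\nabla|u|=\mathrm{Re}(\bar u\,\nabla u)/|u|$ on $\{u\neq 0\}$ and $\nabla|u|=0$ a.e.\ on $\{u=0\}$ (an analogue of the real Stampacchia-type result), after which Step 1 applies directly on $\{u\neq 0\}$ and the inequality is trivial on $\{u=0\}$ since both sides vanish a.e.\ there.
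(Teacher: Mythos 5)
Your argument is correct, but note that the paper does not actually prove this proposition: it is quoted as the classical diamagnetic inequality and dispatched with a citation to \cite[Theorem 7.21]{LossLieb} (the only inequality proved/used in the sequel is its fractional analogue, Proposition \ref{diamagnetic}, taken from \cite{AvSq}). What you wrote is essentially the standard textbook proof behind that citation: the identity $\nabla|u|=\mathrm{Re}(\bar u\,\nabla u)/|u|$ on $\{u\neq 0\}$, the observation that $\mathrm{Re}(\bar u\,\i A u)=\i A|u|^2$ has zero real part because $A$ is real-valued, and the regularization $u_\eps=\sqrt{|u|^2+\eps^2}$ to handle the zero set in the $W^{1,1}_{\mathrm{loc}}$ setting. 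Two small remarks on Step 3: a uniform $L^1_{\mathrm{loc}}$ bound alone would not give weak $L^1$ compactness (Dunford--Pettis requires equi-integrability), so the weak-convergence/lower-semicontinuity detour is only justified because you in fact have the pointwise domination $|\nabla u_\eps|\le|\nabla u|$; but with that domination you can skip the detour entirely, since $\nabla u_\eps=\mathrm{Re}(\bar u\,\nabla u)/\sqrt{|u|^2+\eps^2}$ converges pointwise a.e.\ (to $\mathrm{Re}(\bar u\,\nabla u)/|u|$ on $\{u\neq0\}$ and to $0$ on $\{u=0\}$), hence strongly in $L^1_{\mathrm{loc}}$ by dominated convergence, which identifies $\nabla|u|$ explicitly and lets the pointwise bound pass to the limit a.e.\ directly. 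Your closing alternative (invoking the known formula for $\nabla|u|$ of a $W^{1,1}_{\mathrm{loc}}$ complex-valued function and noting the inequality is trivial on $\{u=0\}$) is the cleanest packaging and is exactly the route Lieb--Loss take; either version is a complete and valid substitute for the citation.
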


The fractional analogue of \eqref{DI} was provided in \cite[Lemma 3.1 and Remark 3.2]{AvSq}:
\begin{prop} \label{diamagnetic}
Let $A\colon\mathbb{R}^n\to\mathbb{R}^n$ be a measurable magnetic potential such that $|A|<\infty$ a.e. in $\mathbb{R}^n$
and let $u\colon\mathbb{R}^n\to\mathbb{C}$ be a measurable function such that $|u|<\infty$ a.e. in $\mathbb{R}^n$. 
Then 
\begin{equation}\label{FDI}
||u(x)|-|u(y)||\le \left|u(x) - e^{\i(x-y)A\left(\tfrac{x+y}{2}\right)}u(y)\right|,
\end{equation}
for a.e. $x,y\in\mathbb{R}^n$.
\end{prop}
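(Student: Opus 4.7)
The inequality to establish is purely pointwise and, at its core, reduces to the reverse triangle inequality in $\mathbb{C}$ combined with the fact that multiplication by a unit-modulus complex number preserves absolute value. The plan is as follows.

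Fix a pair $(x,y)$ in the full-measure set on which $|A|((x+y)/2)<\infty$ and $|u(x)|,|u(y)|<\infty$. Since $A$ takes values in $\mathbb{R}^n$ and $x-y\in\mathbb{R}^n$, the scalar
\[
\theta:=(x-y)\cdot A\!\left(\tfrac{x+y}{2}\right)
\]
is a \emph{real} number, hence the factor $e^{\i\theta}$ lies on the unit circle and in particular $|e^{\i\theta}|=1$. Applying the reverse triangle inequality $\bigl||z_1|-|z_2|\bigr|\le |z_1-z_2|$ in $\mathbb{C}$ with $z_1=u(x)$ and $z_2=e^{\i\theta}u(y)$ yields
\[
\bigl||u(x)|-|e^{\i\theta}u(y)|\bigr|\le \bigl|u(x)-e^{\i\theta}u(y)\bigr|,
\]
and the multiplicativity of the modulus gives $|e^{\i\theta}u(y)|=|e^{\i\theta}|\,|u(y)|=|u(y)|$. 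Substituting produces exactly \eqref{FDI}.

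There is essentially no obstacle beyond checking that $\theta$ is indeed real-valued a.e.; this is guaranteed by the hypothesis $|A|<\infty$ a.e. and the fact that $A$ is $\mathbb{R}^n$-valued. The integrability or smoothness of $A$ plays no role at the pointwise level, so the statement holds for every measurable magnetic potential satisfying the standing finiteness assumption. I would record the argument as a two-line computation in the paper.
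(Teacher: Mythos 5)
Your argument is correct and complete: since $\theta=(x-y)\cdot A\bigl(\tfrac{x+y}{2}\bigr)$ is real a.e. (the set of pairs $(x,y)$ whose midpoint lands in the null set where $|A|=\infty$ is itself null, by Fubini), the reverse triangle inequality applied to $u(x)$ and $e^{\i\theta}u(y)$ gives \eqref{FDI} at once. The paper does not prove this proposition itself but quotes it from \cite[Lemma 3.1 and Remark 3.2]{AvSq}, and your two-line computation is exactly the standard elementary argument behind that citation, so nothing further is needed.
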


\begin{rmk} \label{diamagnetic.rem}
Observe that, using the compact notation introduced before, 
the fractional diamagnetic inequality \eqref{FDI} can be re-stated as
\begin{equation}\label{FDI2}
|D_s|u|(x,y)|\le |D_s^A u(x,y)|,
\end{equation}
a.e. $x, y\in\mathbb{R}^n$, where $D_s v(x,y) = \frac{v(x)-v(y)}{|x-y|^s}$.
\end{rmk}

With this at hand, we are now ready to prove Theorem \ref{hardy}.

\begin{proof}[Proof of Theorem \ref{hardy}]
Given a Young function $G$, by \cite[Theorem 5.1]{ACPS2} there exist a Young function $\hat G$ 
and a positive constant $C=C(n,s)>0$ such that
\begin{equation}\label{Cianchi}
   \int_{\mathbb{R}^n}\hat G\left(\frac{|u(x)|}{|x|^{s}}\right)\,dx
    \leq (1-s)\iint_{\mathbb{R}^{2n}}G(C |D_s u (x,y)| )\,d\mu.
\end{equation}
Moreover, in light of \cite[Propositions 5.1 and 5.2]{C}, since $s<\frac{n}{p^+}$, $\hat G$ and $G$ are equivalent Young functions, i.e., there exist two positive constants $c_1<c_2$ such that $G(c_1 t)\leq \hat G(t)\leq G(c_2 t)$ for all $t\in \mathbb{R}_0^+$. 
Therefore, by using inequality \eqref{Cianchi} on $|u|$ and the Diamagnetic inequality \eqref{FDI}, it holds that
\begin{align*}
    \int_{\mathbb{R}^n}G\left(c_1 \frac{|u(x)|}{|x|^{s}}\right)\,dx
    &\leq (1-s)\iint_{\mathbb{R}^{2n}} G\left(C|D_s|u|(x,y)|\right)\,d\mu\\
    &\leq (1-s) \iint_{\mathbb{R}^{2n}} G\left(C |D_s^A u (x,y)| \right) \,d\mu.
\end{align*}
Thus, the result follows by \eqref{G1}.
\end{proof}
\begin{rmk}\label{rmkhardy}
    Let us notice that, accordingly to the Hardy-type inequality proved by Maz'ya-Shaposhnikova, see \cite[Theorem 2]{MS}, and to the analogous result holding in the case of Orlicz spaces, see \cite{ACPS2}, the inequality \eqref{hi} holds just for small values of the fractional parameter $s$, meaning that $s<\frac{n}{p^+}$. 
        Here, the term $p^+$, which is defined in \eqref{L}, is bigger or equal to the upper Matuszewska-Orlicz index $I(G)$, taken into account in \cite{ACPS}.
        We stress that, if one is interested in proving a Hardy-type inequality for {\it all} $s \in (0,1)$ one has to reinforce the assumptions on the Orlicz function, see e.g. \cite[Section 5]{ACPS2}. On the other hand, as explained in \cite{ACPS}, the validity of condition \eqref{L}, ensures that for small enough $s$ such assumptions are satisfied by all Orlicz functions.
    Finally, we notice that, as a consequence of Theorem \ref{hardy} and \eqref{equivalent}, it holds that
    \begin{equation}\label{overlinehardy}
        \int_{\mathbb{R}^n}\overline{G}\left(\alpha\frac{|u(x)|}{|x|^{s}}\right)\,dx\leq
        \int_{\mathbb{R}^n}G\left(\alpha\frac{|u(x)|}{|x|^{s}}\right)\,dx<\infty
    \end{equation}
    for any $u\in W^{s,G}_{A,0}(\mathbb{R}^n;\mathbb{C})$ and for any $\alpha>0$.
\end{rmk}
\section{Proof of Theorem \ref{MS1}}\label{section4}
In this section we prove Theorem \ref{MS1} combining two estimates for the liminf and the limsup respectively.
\begin{lem}[Liminf estimate]\label{thliminf}
Let $u\in \cup_{s\in(0,1)} W^{s,G}_{A,0}(\mathbb{R}^n;\mathbb{C})$. Then
$$
\liminf_{s\downarrow 0} s \, I_{s,G}^A(u) \geq \frac{2\omega_n}{n}\, I_{\overline{G}}(|u|).
$$
\end{lem}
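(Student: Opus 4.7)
My plan is to reduce the magnetic liminf to the non-magnetic one via the fractional diamagnetic inequality of Proposition~\ref{diamagnetic}, and then invoke the Maz'ya--Shaposhnikova-type liminf bound for Orlicz spaces established in \cite[Theorem 1.1]{ACPS}. This is natural in view of the observation recorded in the Introduction that the limit $s \downarrow 0$ kills the magnetic effect and recovers exactly the bound from the non-magnetic Orlicz setting.

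The first step is purely pointwise: \eqref{FDI2} together with the monotonicity of $G$ yields $G(|D_s^A u(x,y)|) \geq G(|D_s|u|(x,y)|)$ for a.e.\ $(x,y) \in \mathbb{R}^{2n}$, and integration against $d\mu$ gives
\[
I_{s,G}^A(u) \geq \iint_{\mathbb{R}^{2n}} G(|D_s|u|(x,y)|)\,d\mu,
\]
so the problem is reduced to the non-magnetic Orlicz setting applied to the real-valued function $|u|$. The second step is a density argument: starting from a sequence $u_n \in C^\infty_0(\mathbb{R}^n;\mathbb{C})$ with $u_n \to u$ in the magnetic seminorm $|\cdot|^A_{s_0,G}$, I would verify that $\{|u_n|\}$ is Cauchy in the classical (non-magnetic) fractional Orlicz seminorm, so that $|u|$ belongs to the corresponding classical fractional Orlicz-Sobolev space. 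The third and final step applies the non-magnetic liminf of \cite[Theorem 1.1]{ACPS} to $|u|$, yielding
\[
\liminf_{s\downarrow 0} s \iint_{\mathbb{R}^{2n}} G(|D_s|u|(x,y)|)\,d\mu \geq \frac{2\omega_n}{n}\, I_{\overline{G}}(|u|),
\]
which, combined with the diamagnetic reduction above, delivers the desired bound.

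I expect the main technical obstacle to be the density argument in the second step. The diamagnetic inequality in the form of Proposition~\ref{diamagnetic} controls $\bigl||v|(x) - |v|(y)\bigr|$ by $|v(x) - e^{\i\phi}v(y)|$, but it does not directly bound the increment of the real-valued function $|u_n| - |u_m|$ by the magnetic increment of $u_n - u_m$. The natural workaround is to combine the diamagnetic inequality applied to the difference $u_n - u_m$ with the pointwise bound $\bigl||u_n| - |u_m|\bigr| \leq |u_n - u_m|$ and the $\Delta_2$-property \eqref{G2}, in order to conclude that $\{|u_n|\}$ is Cauchy in the classical Orlicz seminorm; this is the key step required to legitimately invoke the result from \cite{ACPS}.
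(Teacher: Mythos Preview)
Your overall strategy---reduce to the non-magnetic case via the fractional diamagnetic inequality \eqref{FDI2} and then invoke \cite[Theorem~1.1]{ACPS} for $|u|$---is exactly the paper's argument. The paper first passes to a sequence $s_k\downarrow 0$ realizing the liminf and then applies these two ingredients, but that is a purely cosmetic difference from your Steps~1 and~3.

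The only place your proposal diverges is Step~2, and the workaround you sketch there does not work. You want $\{|u_n|\}_n$ to be Cauchy in the \emph{non-magnetic} seminorm, i.e.\ you need to control
\[
\iint_{\mathbb{R}^{2n}} G\!\left(\frac{\bigl|\,(|u_n|-|u_m|)(x)-(|u_n|-|u_m|)(y)\,\bigr|}{|x-y|^{s_0}}\right)d\mu .
\]
Neither of your two tools addresses this quantity: the pointwise bound $\bigl||u_n|-|u_m|\bigr|\le |u_n-u_m|$ says nothing about increments, and the diamagnetic inequality applied to $u_n-u_m$ controls the increment of $|u_n-u_m|$, not of $|u_n|-|u_m|$. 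Property \eqref{G2} cannot bridge this gap; the map $u\mapsto |u|$ is simply not Lipschitz between these seminorms.

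Fortunately the detour is unnecessary. Applying \eqref{FDI2} to $u$ itself (rather than to $u_n-u_m$) already gives
\[
I_{s_0,G}(|u|)\;\le\; I_{s_0,G}^A(u)\;<\;\infty,
\]
so $|u|$ has finite fractional Orlicz seminorm at level $s_0$; together with $|u|\in L^G(\mathbb{R}^n)$ and the standard density of $C^\infty_0(\mathbb{R}^n)$ in $W^{s_0,G}(\mathbb{R}^n)$, this is all that is needed to feed $|u|$ into \cite[Theorem~1.1]{ACPS}. The paper takes this for granted and simply writes ``applying \cite[Theorem~1.1]{ACPS} to $|u|$''.
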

\begin{proof}
If $\liminf_{s\downarrow 0} sI_{s,G}^A(u)=+\infty$ the result follows. Otherwise, there exists a sequence $\{s_k\}_{k\in\mathbb{N}}\subset (0,1)$ such that $s_k\downarrow 0$ and
$$
\liminf_{s\downarrow 0} s\,I_{s,G}^A(u) = \lim_{k\to\infty} s_k\,I_{s_k,G}^A(u).
$$
By using Remark \ref{diamagnetic.rem} and the monotonicity of $G$ we get
$$
s_k\iint_{\mathbb{R}^{2n}}  G(|D_{s_k}|u||)\,d\mu\le s_k \iint_{\mathbb{R}^{2n}} G(|D_{s_k}^A u|)\,d\mu
$$
that is, for any $k\in\mathbb{N}$,
$$
s_k \, I_{s_k,G}(|u|) \leq  s_k \, I_{s_k,G}^A(u).
$$
Thus, taking the limit as $k\to\infty$ in the last inequality and applying \cite[Theorem 1.1]{ACPS} to $|u|$, we get
$$
\frac{2\omega_n}{n}\, I_{\overline{G}}(|u|) \leq \lim_{s_k\to\infty} s_k \, I_{s_k,G}^A(u),
$$
which concludes the proof.
\end{proof}

We can now move to the upper estimate for the limsup.
\begin{lem}[Limsup estimate]\label{thlimsup}
Let $u\in \cup_{s\in(0,1)} W^{s,G}_{A,0}(\mathbb{R}^n;\mathbb{C})$. Then
$$
\limsup_{s\downarrow 0} s \, I_{s,G}^A(u) \leq  \frac{2\omega_n}{n}\, I_{\overline{G}}(|u|).
$$
\end{lem}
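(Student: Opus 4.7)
The plan is to combine an exact computation on smooth compactly supported test functions with a density argument, in the spirit of the classical Maz'ya--Shaposhnikova proof. The main difficulty in extending the argument to the magnetic Orlicz setting is that the naive triangle inequality for the magnetic difference quotient produces (via \eqref{G2}) a multiplicative constant $2^{p^+}/2$ that is not sharp. To bypass this loss, I would first establish the sharp upper bound on a dense class and use the non-sharp estimate only on a small remainder.

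First, I would prove the exact limit when $u\in C_0^\infty(\mathbb{R}^n;\mathbb{C})$ with $\supp(u)\subset B_K$. Splitting the double integral at $|x-y|=2K$, on the far region $\{|x-y|>2K\}$ one necessarily has $u(x)=0$ or $u(y)=0$, so the unitary phase factor drops out of $D_s^A u(x,y)$ and $|D_s^A u|$ reduces to $|u(x)|/|x-y|^s$ or $|u(y)|/|x-y|^s$. Passing to polar coordinates in $z=y-x$ and using the substitution $\tau=|u(x)|/r^s$, the far contribution is computed explicitly as $\frac{1}{s}\cdot\frac{2\omega_n}{n}\int_{\mathbb{R}^n}\overline{G}(|u(x)|/(2K)^s)\,dx$, so after multiplication by $s$ and letting $s\downarrow 0$ it yields exactly $\frac{2\omega_n}{n}\int_{\mathbb{R}^n}\overline{G}(|u|)\,dx$. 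For $|x-y|\leq 2K$, a Taylor expansion together with local boundedness of $A$ yields $|u(x)-e^{\i(x-y) A((x+y)/2)}u(y)|\leq C|x-y|$, so $|D_s^A u|\leq C|x-y|^{1-s}$; applying \eqref{G1} separately on $\{|x-y|\leq 1\}$ and $\{1<|x-y|\leq 2K\}$ shows that this near part is uniformly bounded in $s$, and hence vanishes after multiplication by $s$.

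Next, I would establish a rough (non-sharp) upper bound valid for an arbitrary $v\in W^{s_0,G}_{A,0}(\mathbb{R}^n;\mathbb{C})$, without appealing to compact support. Splitting again at $|x-y|=1$, the near part is dominated by $I^A_{s_0,G}(v)$ via the identity $|D_s^A v|=|x-y|^{s_0-s}|D_{s_0}^A v|$ and monotonicity of $G$; for the far part, the triangle inequality $|v(x)-e^{\i(x-y)A((x+y)/2)}v(y)|\leq |v(x)|+|v(y)|$ combined with \eqref{G2} and the same polar-coordinate computation gives
$$
\limsup_{s\downarrow 0} s\,I^A_{s,G}(v) \leq C^{\sharp} \int_{\mathbb{R}^n}\overline{G}(|v|)\,dx
$$
for some constant $C^{\sharp}$ of order $2^{p^+}$. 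The constant is suboptimal, but the bound is of the correct shape.

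Finally, I would combine the two estimates by density. Choosing $u_k\in C_0^\infty$ approximating $u$ in the $W^{s_0,G}_A$-seminorm, the elementary convexity-plus-$\Delta_2$ inequality $G(a+b)\leq (1+\eta)G(a)+C_\eta G(b)$ (available for every $\eta>0$, and obtained by writing $a+b=\lambda(a/\lambda)+(1-\lambda)(b/(1-\lambda))$ and applying \eqref{G1}), together with the pointwise bound $|D_s^A u|\leq |D_s^A u_k|+|D_s^A(u-u_k)|$, yields
$$
s\,I^A_{s,G}(u)\leq (1+\eta)\,s\,I^A_{s,G}(u_k)+C_\eta\,s\,I^A_{s,G}(u-u_k).
$$
Passing to $\limsup_{s\downarrow 0}$ and applying the first step to $u_k$ and the second to $u-u_k$ gives
$$
\limsup_{s\downarrow 0} s\,I^A_{s,G}(u)\leq (1+\eta)\frac{2\omega_n}{n}\int_{\mathbb{R}^n}\overline{G}(|u_k|)\,dx+C_\eta C^{\sharp}\int_{\mathbb{R}^n}\overline{G}(|u-u_k|)\,dx.
$$
Letting $k\to\infty$, the second summand vanishes (since $u_k\to u$ in $L^G$ and $\overline{G}\in\Delta_2$), and then $\eta\downarrow 0$ closes the argument. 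I expect the main technical obstacle to lie precisely in this density step: one must verify that the approximating sequence $u_k\to u$ in the magnetic fractional seminorm also satisfies $\int_{\mathbb{R}^n}\overline{G}(|u-u_k|)\,dx\to 0$, i.e., that seminorm convergence is accompanied by sufficient $L^G$-control. This is exactly the ambient information provided by the magnetic Hardy inequality (Theorem~\ref{hardy}), which therefore plays an essential role in closing the proof.
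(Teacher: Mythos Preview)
Your approach differs structurally from the paper's: the paper never invokes density. It works directly on $u$, first symmetrizing to reduce $I^A_{s,G}(u)$ to $2\int_{\mathbb{R}^n}\int_{\{|y|\ge|x|\}}\dots\,$, then splitting into $\{|y|\ge 2|x|\}$ (where convexity with an auxiliary parameter $\varepsilon$ produces integrals of the form $\int\overline G\bigl((1+\varepsilon)|u(x)|/|x|^s\bigr)\,dx$) and $\{|x|\le|y|<2|x|\}$ (further cut at $|x-y|=N$, exploiting the membership $u\in W^{\bar s,G}_{A,0}$ for some fixed $\bar s$). The Hardy inequality is then used only to certify finiteness of these \emph{weighted} integrals, which is precisely what is needed to pass to the limit following \cite{ACPS}.

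Your density route has a genuine gap at exactly the point you flag, but Theorem~\ref{hardy} does not close it in the way you suggest. You need $\int_{\mathbb{R}^n}\overline G(|u-u_k|)\,dx\to 0$, whereas the Hardy inequality only delivers
\[
\int_{\mathbb{R}^n} G\!\left(\frac{|u-u_k|}{|x|^{s_0}}\right)dx \;\le\; C\,I^A_{s_0,G}(u-u_k)\;\longrightarrow\;0.
\]
On $\{|x|>1\}$ the weight $|x|^{-s_0}<1$ makes this strictly weaker than unweighted $L^{\overline G}$-control; and since $W^{s,G}_{A,0}$ is, in this paper, the closure of $C_0^\infty$ in the seminorm \emph{alone}, nothing in the hypotheses supplies $L^G$-convergence of the approximants. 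A repair would be to redesign Step~2 so that the rough upper bound lands on the weighted quantity that Hardy actually controls (for instance by splitting the far region relative to $|x|$ rather than at the fixed scale $1$)---but carrying this out essentially reproduces the paper's direct argument and renders the density detour superfluous.
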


\begin{proof}
First observe that by Fubini's Theorem and a change of variables
\begin{align*}
&\int_{\mathbb{R}^n} \left( \int_{\{|y|<|x|\}} G\left(\left| \frac{u(x)-e^{i(x-y) A\left(\frac{x+y}{2}\right)} u(y)}{|x-y|^s} \right|\right)\frac{dy}{|x-y|^n}\right)dx\\
&=
\int_{\mathbb{R}^n} \left( \int_{\{|x|>|y|\}} G\left(\left| \frac{u(x)-e^{i(x-y) A\left(\frac{x+y}{2}\right)} u(y)}{|x-y|^s} \right|\right)\frac{dx}{|x-y|^n}\right)dy\\
&=
\int_{\mathbb{R}^n} \left( \int_{\{|y|>|x|\}} G\left(\left| \frac{u(y)-e^{-i(x-y) A\left(\frac{x+y}{2}\right)} u(x)}{|x-y|^s} \right|\right)\frac{dy}{|x-y|^n}\right)dx.
\end{align*}
Since
$$
\left| \left( \frac{u(y)-e^{-i(x-y) A\left(\frac{x+y}{2}\right)} u(x)}{|x-y|^s}\right) \frac{e^{i(x-y) A\left(\frac{x+y}{2}\right)}}{e^{i(x-y) A\left(\frac{x+y}{2}\right)}} \right| = \left|   \frac{u(x)-e^{i(x-y) A\left(\frac{x+y}{2}\right)} u(y)}{|x-y|^s}   \right|,
$$
then
\begin{align*}
s \, I_{s,G}^A(u) &= s \, \int_{\mathbb{R}^n}\left(\int_{\{|y|\geq |x|\}} G(|D_s^A u(x,y)|)\frac{dy}{|x-y|^n}\right)dx\\&\quad + s \, \int_{\mathbb{R}^n}\left(\int_{\{|y|< |x|\}} G(|D_s^A u(x,y)|)\frac{dy}{|x-y|^n}\right)dx\\
&=2s  \, \int_{\mathbb{R}^n} \left(\int_{\{|y|\geq |x|\}} G(|D_s^A u(x,y)|)\frac{dy}{|x-y|^n}\right)dx.
\end{align*}

Let us now fix $\varepsilon>0$. By the monotonicity and convexity of $G$, we can split the previous integral as
\begin{equation}\label{1}
\begin{split}
s \, I_{s,G}^A(u) 
&=2s  \, \int_{\mathbb{R}^n}\left(\int_{\{|y|\geq 2|x|\}} G(|D_s^A u(x,y)|)\frac{dy}{|x-y|^n}\right)dx\\&\quad+2s\, 
\int_{\mathbb{R}^n}\left(\int_{\{|x|\leq |y|<2|x|\}} G(|D_s^A u(x,y)|)\frac{dy}{|x-y|^n}\right)dx\\
&\leq \frac{2s}{1+\varepsilon}J_1 + \frac{2s\varepsilon}{1+\varepsilon}J_2 + 2s J_3
\end{split}
\end{equation}
where
\begin{align*}
J_1&:=\int_{\mathbb{R}^n}\left(\int_{\{|y|\geq 2|x|\}} G\left((1+\varepsilon)\frac{|u(x)|}{|x-y|^s}\right)\frac{dy}{|x-y|^n}\right)dx\\
J_2&:=\int_{\mathbb{R}^n}\left(\int_{\{|y|\geq 2|x|\}} G\left(\frac{1+\varepsilon}{\varepsilon}\frac{|u(y)|}{|x-y|^s}\right)\frac{dy}{|x-y|^n}\right)dx\\
J_3&:= 
\int_{\mathbb{R}^n}\left(\int_{\{|x|\leq |y|<2|x|\}} G(|D_s^A u(x,y)|)\frac{dy}{|x-y|^n}\right)dx.
\end{align*}

Taking into account \cite[(2.22) and (2.23)]{ACPS}, it holds that
\begin{equation}\label{2}
    J_1\leq \frac{w_n}{n s}\int_{\mathbb{R}^n} \overline{G}\left((1+\varepsilon) \frac{|u(x)|}{|x|^s}\right)\,dx
\end{equation}
and
\begin{equation}\label{3}
    J_2\leq w_n \int_{\mathbb{R}^n} G\left(\frac{1+\varepsilon}{\varepsilon} 2^s \frac{|u(y)|}{|y|^s} \right)\,dy.
\end{equation}
To conclude the proof, we only need to provide a suitable upper estimate from above for $J_3$. 
We claim that, fixed $N>3$, there exists $\overline{s}\in(0,1)$ such that
\begin{equation}\label{4}
    \begin{split}
        J_3 \leq  
        \iint_{\mathbb{R}^n \times E}
		 G\left(N^{\overline{s}-s}|D_{\overline{s}}^A u(x,y)|
        \right) \,d\mu +\frac{\varepsilon}{s}
    \end{split}
\end{equation}
for any $s\in(0,\overline{s})$, 
where $E:=\{|x|\leq|y|<2|x|,|x-y|\leq N\}$.\\

In order to prove \eqref{4}, we first notice that $J_3$ can be written as
\begin{equation*}
J_3 = (i)+(ii), 
\end{equation*}
where, denoting $F:=\left\{|x|\leq|y|<2|x|,\, |x-y|>N\right\}$, we write
\begin{align*}
(i):=\iint_{\mathbb{R}^n\times E} G(|D_s^A u(x,y)|)\,d\mu, \qquad (ii):=\iint_{\mathbb{R}^n\times F} G(|D_s^A u(x,y)|)\,d\mu.
\end{align*}

Since $u\in\bigcup_{s\in (0,1)}W^{s,G}_{A,0}(\mathbb{R}^n;\mathbb{C})$, there exists $\overline{s}\in(0,1)$ such that $u\in W^{\overline{s},G}_{A,0}(\mathbb{R}^n;\mathbb{C})$. Let now $s<\overline{s}$. Then
\begin{align}\label{ineq}
\begin{split}
    (i)&=\iint_{\mathbb{R}^n \times E}  G\left(|x-y|^{\overline{s}-s}|D_{\overline{s}}^A u(x,y)|\right)\,d\mu \leq \iint_{\mathbb{R}^n \times E}  G\left(N^{\overline{s}-s}|D_{\overline{s}}^A u(x,y)|\right)\,d\mu.
\end{split}
\end{align}
Moreover, since $|x|\leq|y|<2|x|$ and $|x-y|>N$ implies that
\begin{align}\label{5}
    |x|>\frac{N}{3}\text{ and}\ |y|>\frac{N}{3},
\end{align}
then, by \eqref{5}, the monotonicity and the convexity of $G$, by a change of variable and taking into account that $\Big|e^{i(x-y) A\left(\frac{x+y}{2}\right)}\Big|=1$, we get
\begin{align*}
    (ii)
    &\leq \frac12 \iint_{\mathbb{R}^n \times F} G\left(\frac{2|u(x)|}{|x-y|^s}\right)\,d\mu + \frac12 \iint_{\mathbb{R}^n \times F} G\left(\frac{2|u(y)|}{|x-y|^s}\right)\,d\mu\\
    &\leq\frac12\int_{\{|x|>\frac{N}{3}\}}\left(\int_{\{|x-y|>N\}}G\left(\frac{2|u(x)|}{|x-y|^s}\right)\,\frac{dy}{|x-y|^n}\right)
    \,dx\\
    &\quad+\frac12\int_{\{|y|>\frac{N}{3}\}}\left(\int_{\{|x-y|>N\}}G\left(\frac{2|u(y)|}{|x-y|^s}\right)\,\frac{dx}{|x-y|^n}\right)
    \,dy\\
    &=\int_{\{|x|>\frac{N}{3}\}}\left(\int_{\{|x-y|>N\}}G\left(\frac{2|u(x)|}{|x-y|^s}\right)\,\frac{dy}{|x-y|^n}\right)
    \,dx.
\end{align*}
Finally, the last inequality and a change of variables gives that
\begin{align*}
(ii)&\leq 
\frac{w_n}{n}\int_{\{|x|>\frac{N}{3}\}}\left(\int_N^{+\infty}G\left(\frac{2|u(x)|}{r^s}\right)\,\frac{dr}{r}\right)
    \,dx\\
    &=\frac{w_n}{ns}\int_{\{|x|>\frac{N}{3}\}}\left(\int_0^{\frac{2|u(x)|}{N^s}}G(\tau)\frac{d\tau}{\tau}\right)\,dx\\
    &=\frac{w_n}{ns}\int_{\{|x|>\frac{N}{3}\}}\overline{G}\left(\frac{2|u(x)|}{N^s}\right)\,dx\\
    &<\frac{w_n}{ns}\int_{\{|x|>\frac{N}{3}\}}\overline{G}(2|u(x)|)\,dx<\frac{\varepsilon}{s}
\end{align*}    
for $N$ sufficiently large. From this, \eqref{4} easily follows.\\
\medskip

In order to justify the passage to the limsup, we can argue as in \cite{ACPS} once again. 
In this way, by the arbitrariness of $\varepsilon$, 
by gathering \eqref{1}, \eqref{2}, \eqref{3} and \eqref{4}, by \eqref{overlinehardy} and by using Fatou's Lemma, we close the proof.
\end{proof}
\begin{proof}[Proof of Theorem \ref{MS1}]
Combining the lower bounds obtained in Lemma \ref{thliminf} 
with the upper bounds provided by Lemma \ref{thlimsup}, we easily get \eqref{MainRes}.
\end{proof}


\end{document}